\documentclass[11pt]{amsart}

\usepackage[all]{xy}
\usepackage{amsfonts}
\usepackage{eufrak}
\usepackage{amssymb}
\usepackage{amsmath}
\usepackage{mathrsfs}
\usepackage{color}
\usepackage[pagebackref,colorlinks]{hyperref}

\parindent=15pt
\parskip=3pt
\setlength{\textwidth}{7in}
\setlength{\oddsidemargin}{-10pt}
\setlength{\evensidemargin}{-10pt}
\setlength{\textheight}{9.3in}
\setlength{\topmargin}{-10pt}

\providecommand{\U}[1]{\protect\rule{.1in}{.1in}}

\theoremstyle{plain}
\newtheorem {lemma}{Lemma}[]
\newtheorem {theorem}[lemma]{Theorem}

\newtheorem {corollary}[lemma]{Corollary}

\newtheorem {proposition}[lemma]{Proposition}

\theoremstyle{remark}
\newtheorem {remark}[lemma]{Remark} 
\newtheorem {remarks}[lemma]{Remarks} 

\theoremstyle{definition}



\newcommand{\Spec}{\operatorname{Spec}}

\newcommand{\Cok}{\operatorname{Coker}}
\newcommand{\Ker}{\operatorname{Ker}}
\newcommand{\Imm}{\operatorname{Im}}
\newcommand\CK[1][1]{\operatorname{CK}_{#1}}
\newcommand\ZK[1][1]{\operatorname{ZK}_{#1}}
\newcommand\CG[1][1]{\operatorname{CG}_{#1}}
\newcommand\ZG[1][1]{\operatorname{ZG}_{#1}}
\newcommand\CF{\operatorname{CF}}

\newcommand\ZF{\operatorname{ZF}}
\newcommand\ind{\operatorname{ind}}

\newcommand{\End}{\operatorname{End}}

\newcommand{\mA}{\mathcal A}
\newcommand{\mB}{\mathcal B}
\newcommand{\mO}{\mathcal O}
\newcommand{\mF}{\mathcal F}

\newcommand{\mM}{\mathcal M}

\newcommand{\mC}{\mathcal C}
\newcommand{\mD}{\mathcal D}

\begin{document}
\title{$K$-Theory of Azumaya Algebras over Schemes}

\author{Roozbeh Hazrat}\address{
Department of Pure Mathematics\\
Queen's University\\
Belfast BT7 1NN\\
United Kingdom} \email{r.hazrat@qub.ac.uk}

\author{Raymond T. Hoobler}\address{
Department of Mathematics\\
City College of New York and Graduate Center, CUNY\\
137th Street and Convent Avenue\\
 NY 10031, USA}
   \email{rhoobler@ccny.cuny.edu}

\begin{abstract}
Let $X$ be a connected, noetherian scheme and
$\mathcal{A}$ be a sheaf of Azumaya algebras on $X$ which is a locally free
$\mathcal{O}_{X}$-module of rank $a$. We show that the kernel
and cokernel of $K_{i}\left( X\right) \rightarrow K_{i}\left( \mathcal{A}
\right) $ are torsion groups with exponent $a^{m}$ for some $m$ and any $i\geq 0$, when $X$ is regular or $X$ is of dimension $d$ with an ample sheaf (in this case $m\leq d+1$). As a consequence,  $K_{i}(X,\mathbb Z/m)\cong K_{i}(\mA,\mathbb
Z/m)$, for any $m$ relatively prime to $a$.
\end{abstract}

\begin{thanks}
{The first author acknowledges the support of EPSRC first grant
scheme EP/D03695X/1 and Queen's University PR grant.}
\end{thanks}

\maketitle


An Azumaya algebra over a scheme is a sheaf of algebra which is one
(\'etale) extension away from being a full matrix algebra. This
should indicate that the functors arising from linear algebra would
be similar over the Azumaya algebra and its base algebra. In
\cite{webcor} it was shown that this is the case for Hochschild
homology (over affine schemes). The aim of this note is to show that
we have a similar result for $K$-functors. Indeed, we will show that,
when $X$ is of dimension $d$ with an ample sheaf, e.g. affine or quasi-projective, or $X$ is regular, and $\mathcal{A}$ is a sheaf of
Azumaya algebras on $X$ that is a locally free $\mathcal{O}_{X}$-module of rank $a$, then $K_i(X)$ is isomorphic to $K_i(\mA)$ up to bounded $a$-torsion. In order to prove these results, we first show that the kernel
and cokernel of $K_{i}\left( X\right) \rightarrow K_{i}\left( \mathcal{A}
\right) $ are torsion groups with exponent $a^{m}$ for some $m$.
When $\mA$ is free over  $\mathcal{O}_X$, this is a straightforward argument using Morita theory. When $\mA$ is locally free, and $X$ is of finite dimension with an ample sheaf, we will use an extension of Bass' result on $K$-theory of rings to do this. An alternative argument is given when $X$ is regular, where we use a Mayer-Vietoris sequence to piece together local results into a global one. One reason that we focus on kernel and cokernel of $K_{i}\left( X\right) \rightarrow K_{i}\left( \mathcal{A}\right)$ is that even for $i=1$ and $\mathcal{A}$ being a division algebra the cokernel gives a very interesting group with applications to the group structure of the algebra (see Remark~\ref{dkyo}). 

Here $K_{i}\left( \mathcal{A}\right) $ is the Quillen $K$-theory of the
exact category of sheaves of $\mO_X-$ locally free, left $\mathcal{A}$-modules of finite type. If $\mA$ is non-commutative,  we define a
locally free, left $\mathcal{A}$-module to be a left $\mathcal{A}$-module $%
\mathcal{M}$ such that $\mM$ is locally free as an $\mO_X$-module, and
we use the category $\mA\mbox{-mod}$ and the corresponding exact subcategory of locally free left $\mA$-modules to calculate
$G_{i}\left( \mathcal{A}\right)$ and $K_{i}\left( \mathcal{A}\right)$ respectively.  We follow the
notation in \cite{srinivas} throughout this
note.  
$X$ and $Y$ will always denote schemes, $\mathcal{R}$ and
$\mathcal{S}$ commutative $\mathcal{O}_{X}$-algebras, and
$\mathcal{A}$ and $\mathcal{B}$ possibly non-commutative
$\mathcal{O}_{X}$-algebras that are locally free and of
finite type as $\mathcal{O}_{X}$-modules. Tensor products are over $\mathcal{%
O}_{X}$ unless otherwise indicated. If $X$ is a scheme and $\mA$ is
a not necessarily commutative sheaf of $\mathcal{%
O}_{X}$-algebras, we follow standard notation and denote the category of
left $\mA$-modules (resp. right $\mA$-modules, $\mA$-bimodules) by
$\mA\mbox{-mod}$ (resp. $\mbox{mod-}\mA,$ $\mA\mbox{-mod-}\mA$). If $\mF$ is a locally
free $\mO_X$-module, $\left[ \mF \right]$ denotes its class in
$K_0\left( X \right)$.

Let $\mathcal C$ be a
category, and let $Ab$ be the category of abelian groups. If $F:\mathcal C \rightarrow Ab$
is any functor and $f:A \rightarrow B$ is a morphism in $\mC$, define the functors $\ZF\left( B/A\right)$
and $\CF\left( B/A\right)$ by the exact sequence
\begin{equation}\label{4term}
\begin{split}
\xymatrix{ 0 \ar[r]&\ZF\left( B/A \right)\ar[r]&F(A)\ar[r]^{F(f)}& F(B)\ar[r]&\CF\left( B/A\right)\ar[r]&0}
\end{split}
\end{equation}

In case $\mC$ and $\mD$ are $\mA\mbox{-mod}$ and $\mB\mbox{-mod}$ respectively and $F$ is $G_i$,
we shorten this to $\ZG[i]\left(  \mB/\mA\right)$ and $\CG[i]\left(  \mB/\mA\right)$ respectively. We adopt similar notation, $\ZK[i]\left(  \mB/\mA\right)$ and $\CK[i]\left(  \mB/\mA\right)$, when restricting to the corresponding exact subcategory of locally free, left modules.

If $\phi :\mA\rightarrow \mB$ is an
$\mathcal{O}_{X}$-algebra homomorphism that makes $\mB$ a sheaf of flat $\mA$ modules, then $\mB\in \mB\mbox{-mod-}\mA$ and $\phi ^{i }:K_{i}\left( \mathcal{A}\right) \rightarrow K_{i}\left( \mathcal{B}\right) $
denotes the functorial map defined by $\mathcal{B}\otimes_{\mA} -:\mathcal{A}\mbox{-mod}
\rightarrow \mathcal{B}\mbox{-mod}$. Note that $\ZK[i]\left(  \mB/\mA\right)$ and $\CK[i]\left(  \mB/\mA\right)$ become functors on the category whose objects are $\mO_X$ algebra homomorphisms.  We denote the map defined by
restricting the action of $\mathcal{B}$ to an action of
$\mathcal{A}$ by
$res_{\mathcal{B}}^{\mathcal{A}}:\mathcal{B}\mbox{-mod}\rightarrow
\mathcal{A}\mbox{-mod}$. Since this is exact it induces $\phi _{i
}:K_{i}\left( \mathcal{B}\right) \rightarrow K_{i}\left( \mathcal{A}%
\right) .$

\begin{proposition}\label{exact}
Let $\alpha :\mathcal{A}\rightarrow \mathcal{B},$ $\beta :\mathcal{B}%
\rightarrow \mathcal{C}$ be homomorphisms of possibly non-commutative
sheaves of $\mathcal{O}_{X}$ algebras such that $\mathcal{B}$ is locally
free when considered as a left $\mA$ and also a right $\mA$ module and $\mathcal{C}$ is locally free when considered as a left $\mB$ and also as a right $\mB$ module. Then, for any $i,$ there is an exact sequence

\begin{eqnarray*}
0 &\longrightarrow &\ZK[i]\left( \mathcal{B}/\mathcal{A}\right) \longrightarrow
\ZK[i]\left( \mathcal{C}/\mathcal{A}\right) \longrightarrow \ZK[i]\left(
\mathcal{C}/\mathcal{B}\right)  \\
&\longrightarrow &\CK[i]\left( \mathcal{B}/\mathcal{A}\right) \longrightarrow
\CK[i]\left( \mathcal{C}/\mathcal{A}\right) \longrightarrow \CK[i]\left(
\mathcal{C}/\mathcal{B}\right) \longrightarrow 0.
\end{eqnarray*}
\end{proposition}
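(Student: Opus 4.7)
\emph{Plan.} The statement is the kernel-cokernel (or ``ker-coker'') exact sequence for a composition of abelian group maps, applied to $K_i(\mA) \xrightarrow{\alpha^i} K_i(\mB) \xrightarrow{\beta^i} K_i(\mC)$. My plan has two steps: first, identify each of the six terms with the kernel or cokernel of one of $\alpha^i$, $\beta^i$, and $(\beta\alpha)^i$; second, invoke the standard six-term sequence associated to such a composition.

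For the first step I must check that $\alpha^i$, $\beta^i$ and $(\beta\alpha)^i$ are well defined on the exact subcategory of locally free modules used to compute $K_i$. Since $\mB$ is locally free as a right $\mA$-module, the functor $\mB\otimes_{\mA}-\colon \mA\mbox{-mod}\to \mB\mbox{-mod}$ is exact. If $\mM$ is a locally free left $\mA$-module, then locally $\mM\simeq \mA^n$, so locally $\mB\otimes_{\mA}\mM\simeq \mB^n$, which is locally free as a left $\mB$-module and, since $\mB$ is locally free as an $\mO_X$-module, also locally free over $\mO_X$. Thus $\mB\otimes_{\mA}-$ restricts to an exact functor between the relevant exact subcategories, inducing $\alpha^i$; the construction of $\beta^i$ and $(\beta\alpha)^i$ is identical. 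Associativity of tensor product, $\mC\otimes_{\mB}(\mB\otimes_{\mA}\mM)\simeq \mC\otimes_{\mA}\mM$, gives $\beta^i\circ \alpha^i=(\beta\alpha)^i$ on $K$-theory. The definition \eqref{4term} then identifies the six terms in the proposition as $\Ker$ and $\Cok$ of these three homomorphisms.

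For the second step I rely on the following purely algebraic fact: for any composition $P\xrightarrow{f} Q \xrightarrow{g} R$ of abelian group homomorphisms, the sequence
\[
0 \to \Ker f \to \Ker(gf) \to \Ker g \xrightarrow{\delta} \Cok f \to \Cok(gf) \to \Cok g \to 0
\]
is exact, where the unlabeled arrows are induced by $f$, $g$, or the identity, and $\delta$ is induced by $\Ker g\hookrightarrow Q \twoheadrightarrow Q/\Imm f=\Cok f$. Exactness at each of the six spots is a short diagram chase using the identities $f^{-1}(\Ker g)=\Ker(gf)$, $g(\Imm f)=\Imm(gf)$, and $g^{-1}(\Imm(gf))=\Ker g+\Imm f$.

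I do not foresee any real obstacle; the content is entirely formal, given the functoriality of Quillen $K$-theory under exact extension of scalars. The only step with any substance is the verification in the first part, for which the hypothesis that $\mB$ (resp.\ $\mC$) is locally free both as a left and as a right $\mA$-module (resp.\ $\mB$-module) is more than sufficient.
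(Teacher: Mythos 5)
Your proof is correct and takes essentially the same route as the paper: both establish the standard six-term kernel--cokernel exact sequence for the composite $K_i(\mA)\to K_i(\mB)\to K_i(\mC)$ by a diagram chase, the paper doing so by splitting each four-term sequence \eqref{4term} into two short exact sequences through $\Imm(i_{\mB/\mA})$, $\Imm(i_{\mC/\mA})$ and chasing the resulting commutative diagrams. One small caveat in your preliminary verification: by the paper's convention a ``locally free left $\mA$-module'' is one that is locally free over $\mO_X$ (not locally $\mA^n$), so the correct justification that $\mB\otimes_{\mA}-$ preserves the exact subcategory is the hypothesis that $\mB$ is locally free as a \emph{right} $\mA$-module, which gives locally $\mB\otimes_{\mA}\mM\cong\mM^{n}$, an $\mO_X$-locally free sheaf.
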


\begin{proof}
This is a straghtforward diagram chase once the four term sequence (\ref{4term}) is broken into two short exact sequences in the diagrams:
\begin{equation}
\begin{split}
\xymatrix{ 0\ar[r]&\ZK[i](\mB/\mA)\ar[r]^-{i_{\mB/\mA}}\ar[d]&K_i(\mA)\ar[r]\ar@{=}[d]&\Imm\left( i_{\mB/\mA} \right)\ar[r]\ar[d]&0\\
0\ar[r]&\ZK[i](\mC/\mA)\ar[r]^-{i_{\mC/\mA}}&K_i(\mA)\ar[r]&\Imm\left( i_{\mC/\mA} \right)\ar[r]&0}
\end{split}
\end{equation}
and
\begin{equation}
\begin{split}
\xymatrix{ &&0\ar[d]&&\\
&&\ZK[i](\mC/\mB) \ar[d]&&\\
0\ar[r]&\Imm(i_{\mB/\mA})\ar[r]\ar[d]&K_i(\mB)\ar[r]\ar[d]&\CK[i](\mB/\mA)\ar[r]\ar[d]&0\\
0\ar[r]&\Imm(i_{\mC/\mA})\ar[r]&K_i(\mC)\ar[r]\ar[d]&\CK[i](\mC/\mA)\ar[r]&0\\
&&\CK[i](\mC/\mB)\ar[d]&&\\
&&0&&}
\end{split}
\end{equation}
\end{proof}

In order to effectively use this result we need some information about one
of the groups.

\begin{proposition}\label{str}
Let $X$ be a scheme. Let $\mA$ and $\mB$ be possibly non-commutative sheaves of $\mO_X$-algebras that are both locally free $\mO_X$-modules of finite type. Let $i_{\mA} :\mathcal{A}\rightarrow \mA \otimes \mathcal{B}$ be the homomorphism defined by $i_{\mA}(x)=x \otimes 1$. Then $%
\ZK[i]\left( \mA \otimes \mathcal{B}/\mathcal{A}\right) $ is annihilated by $\left[
\mathcal{B}\right] $ $\in K_{0}\left( \mathcal{O}_X \right) .$ If $\mA$ is an Azumaya algebra, then $\CK[i]\left( \mA \otimes \mathcal{B}/\mathcal{B}\right) $ is annihilated by $\left[
\mathcal{A}\right] $ $\in K_{0}\left( \mathcal{O}_X \right) .$
Moreover
if $\mathcal{F}$ is a locally free sheaf of $\mathcal{O}_{X}$-modules,
then
\begin{eqnarray*}
\ZK[i]\left( \mA \otimes \End\left( \mathcal{F}\right) /\mA\right)
&\cong  &\Ker\left( \left[ \mathcal{F}^{\vee }\right] :K_{i}\left(
\mA\right) \rightarrow K_{i}\left( \mA\right)
\right) \text{ and} \\
\CK[i]\left(\mA \otimes \End\left( \mathcal{F}\right) /\mA\right)
&\cong &K_{i}\left( \mA\right) /\left[ \mathcal{F}^{\vee }%
\right] \cdot K_{i}\left( \mA\right) .
\end{eqnarray*}

\end{proposition}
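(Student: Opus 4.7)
The plan is to handle each claim by comparing the obvious induction and restriction functors along $i_\mA$ and $i_\mB$, with the Azumaya hypothesis and Morita equivalence entering only where strictly necessary.

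For $\ZK[i](\mA\otimes\mB/\mA)$, consider the composition
\[
K_i(\mA) \xrightarrow{i_\mA^*} K_i(\mA\otimes\mB) \xrightarrow{\mathrm{res}} K_i(\mA),
\]
where $\mathrm{res}$ restricts along $i_\mA$. Unwinding the right $\mA$-action on $\mA\otimes\mB$ induced by $i_\mA$ gives $(\mA\otimes\mB)\otimes_\mA M \cong \mB\otimes_{\mO_X}M$, with $\mA$ acting on the $M$-factor and $\mB$ on the $\mB$-factor; restricting back to $\mA$ therefore yields $\mB\otimes M$ of class $[\mB]\cdot[M]$ in $K_i(\mA)$ (the $K_0(\mO_X)$-module action is well defined because $\mB$ is locally free). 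Thus $\mathrm{res}\circ i_\mA^*$ is multiplication by $[\mB]$, and $[\mB]$ annihilates $\ker i_\mA^* = \ZK[i](\mA\otimes\mB/\mA)$.

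For the cokernel statement with $\mA$ Azumaya, compute the mirror composition
\[
K_i(\mA\otimes\mB) \xrightarrow{\mathrm{res}} K_i(\mB) \xrightarrow{i_\mB^*} K_i(\mA\otimes\mB).
\]
The same unwinding produces $i_\mB^*(\mathrm{res}[N])$ as the class of $\mA\otimes N$ with the ``outer'' action ($\mA$ on the $\mA$-factor, $\mB$ on $N$), whereas $[\mA]\cdot[N]$ is represented by the same underlying sheaf with the ``inner'' action ($\mA\otimes\mB$ acting entirely on $N$). The crucial step is that these two classes agree in $K_i(\mA\otimes\mB)$ when $\mA$ is Azumaya. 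To establish this, I would promote each of the outer and inner structures to an $\mA\otimes\mA^{\mathrm{op}}\otimes\mB$-module on $\mA\otimes N$ by letting $\mA^{\mathrm{op}}$ act by right multiplication on the $\mA$-factor in both cases (which commutes with both the outer and inner $\mA\otimes\mB$-actions). The Azumaya hypothesis provides the Morita equivalence $(\mA\otimes\mA^{\mathrm{op}})\text{-mod}\simeq\mO_X\text{-mod}$ coming from $\mA\otimes\mA^{\mathrm{op}}\cong\End_{\mO_X}(\mA)$; the outer extension is manifestly Morita-equivalent to the $\mB$-module $\mathrm{res}\,N$, and the inner extension has the same Morita image, which can be verified either by computing the bimodule invariants $\Hom_{\mA\text{-bi}}(\mA,\mA\otimes N)$ directly (using, for example, the reduced-trace element of $\mA\otimes\mA^{\mathrm{op}}$ to build the comparison map), or by reducing étale-locally to the split case $\mA\cong M_n$, where both structures visibly represent $n^2$ copies of the same simple $\mA\otimes\mB$-module. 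The two extensions are therefore isomorphic as $\mA\otimes\mA^{\mathrm{op}}\otimes\mB$-modules, and restricting back identifies outer with inner in $(\mA\otimes\mB)$-mod. Consequently $i_\mB^*\circ\mathrm{res}$ is multiplication by $[\mA]$, so $[\mA]\cdot\eta\in\Imm(i_\mB^*)$ for every $\eta\in K_i(\mA\otimes\mB)$, and $[\mA]$ annihilates $\CK[i](\mA\otimes\mB/\mB)$.

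For the explicit formulas when $\mB=\End(\mF)$, the canonical identification $\mA\otimes\End(\mF)\cong\End_\mA(\mA\otimes\mF)^{\mathrm{op}}$ (given by $a\otimes\phi\mapsto(b\otimes f\mapsto ba\otimes\phi(f))$), combined with the fact that $\mA\otimes\mF$ is a progenerator of $\mA$-mod, produces a Morita equivalence $K_i(\mA\otimes\End(\mF))\cong K_i(\mA)$. Under this equivalence $i_\mA^*$ becomes multiplication by $[\mF^\vee]$: indeed $(\mA\otimes\End(\mF))\otimes_\mA M\cong\End(\mF)\otimes_{\mO_X}M$ (with $\End(\mF)$ acting on itself and $\mA$ on $M$), whose Morita image in $\mA$-mod is $\mF^\vee\otimes_{\mO_X}M$. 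The identifications of $\ZK[i]$ and $\CK[i]$ with the kernel and cokernel of multiplication by $[\mF^\vee]$ follow. The only genuinely difficult step in the whole argument is the inner/outer matching in the second paragraph, where the Azumaya hypothesis is essential; everything else is formal Morita bookkeeping.
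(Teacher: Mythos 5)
Your treatment of the kernel statement and of the $\End(\mF)$ computations is correct and coincides with the paper's argument. The problem is in your second paragraph, on the cokernel statement. You reduce everything to the claim that the ``outer'' and ``inner'' $\mA\otimes\mB$-module structures on $\mA\otimes N$ define the same class in $K_i(\mA\otimes\mB)$, and you correctly flag this as the crux, but neither of your two proposed justifications actually establishes it. The \'etale-local reduction to $\mA\cong M_n$ shows only that the two modules are \'etale-locally isomorphic; that does not yield a global isomorphism (non-isomorphic modules can be \'etale-locally isomorphic, and there is no cocycle argument supplied to glue the local isomorphisms), nor does it give equality in $K_i$, since higher $K$-theory does not satisfy \'etale descent. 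The bimodule-invariant route is the right kind of idea, but it is only gestured at: one must actually produce a natural isomorphism $\mathcal{H}om_{\mA\otimes\mA^{\mathrm{op}}}\left(\mA,\,\mA\otimes N\right)\cong N$ for the \emph{mixed} structure (right multiplication on the $\mA$-factor, $\mA$ acting on $N$), and the naive map built from the separability idempotent $e=\sum u_i\otimes v_i$, namely $x\otimes n\mapsto\sum u_i x\otimes v_i n$, intertwines the two actions but fails to be injective already for $M_2$. So as written the key step is asserted, not proved.

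The paper's proof shows that this comparison can be avoided altogether. Instead of computing $i_\mB^{*}\circ res$ and matching it against multiplication by $[\mA]$, one observes that the exact functor $M\mapsto res^{\mA\otimes\mB}_{\mA^{op}\otimes\mA\otimes\mB}\left(\mA^{op}\otimes M\right)$ induces multiplication by $[\mA^{op}]=[\mA]$ on $K_i(\mA\otimes\mB)$ directly (this is the ``inner'' structure and nothing needs to be compared), and that this functor factors through $\mB\mbox{-mod}$: the Azumaya hypothesis gives $\mA\otimes\mA^{op}\cong\End(\mA)$, hence a Morita equivalence $\mA\otimes -:\mB\mbox{-mod}\xrightarrow{\ \sim\ }\mA^{op}\otimes\mA\otimes\mB\mbox{-mod}$ fitting into a commuting triangle with the restriction and with the induction $i_\mB^{*}=\mA\otimes -$. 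Consequently $[\mA]\cdot\eta\in\Imm\left(i_\mB^{*}\right)$ for every $\eta$, and $\CK[i]\left(\mA\otimes\mB/\mB\right)$ is killed by $[\mA]$, with no outer/inner identification required. If you want to keep your formulation, you must either carry out the bimodule computation honestly (constructing the natural isomorphism and verifying it is an isomorphism, e.g.\ by checking it locally \emph{after} exhibiting a global natural map) or switch to the paper's factorization.
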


\begin{proof}
Consider the commutative diagram:
\begin{equation}
\begin{split}
\xymatrix{\mA \otimes \mB\mbox{-mod} \ar[dr]^{res_{\mA \otimes \mB}^{\mA}} \\
\mA\mbox{-mod} \ar[u]^{-\otimes \mB} \ar[r]_{\mB\otimes -}&
\mA\mbox{-mod}}
\end{split}
\end{equation}
Here the horizontal map induces
multiplication by $\left[ \mathcal{B}\right]\in K_0\left( X \right) $ on $K_i\left( \mA \right) $
and so $\left[ \mathcal{B}\right] \cdot \ZK[i]\left( %
\mA \otimes \mB/\mathcal{A}\right) =0.$

If $\mA$ is a sheaf of Azumaya algebras, then $\mA \otimes \mA^{op} \cong \End( \mA )$. The Morita Theorems \cite{knus} then show that the functor $\mA \otimes -:\mB\mbox{-mod}\rightarrow \mA^{op}\otimes\mA\otimes\mB\mbox{-mod}$ is an equivalence of categories in the diagram:
\begin{equation}
\begin{split}
\xymatrix{\mA\otimes\mB\mbox{-mod} \ar[r]^-{\mA^{op} \otimes -}&\mA^{op}\otimes \mA\otimes \mB\mbox{-mod} \ar[rr]^-{res^{\mA \otimes \mB}_{\mA^{op}\otimes \mA\otimes \mB}}&&\mA\otimes\mB\mbox{-mod}\\
&\mB\mbox{-mod} \ar[u]^{\mA \otimes -}_{\cong}\ar[urr]_{\mA \otimes -}&
}
\end{split}
\end{equation}
Evaluating the composite of the functors in the row on an $M \in \mA\otimes \mB\mbox{-mod}$, sends $M$ to $\mA^{op}\otimes M$. Consequently this induces multiplication by $[\mA]=[\mA^{op}] \in K_0(X)$ on $K_i(\mA\otimes\mB)$ and this multiplication factors through $K_i(\mB)\rightarrow K_i(\mA\otimes \mB)$ from which the desired result follows easily.

The Morita Theorems show that $\mathcal{F}\otimes - : \mA\mbox{-mod}
\rightarrow \End\left( \mF\right)\otimes\mA\mbox{-mod}$ is an equivalence
of categories that takes locally free left $\mA$ modules to locally
free, left $\End\left( \mathcal{F}\right)\otimes\mA $ modules.
Consequently $K_{i}\left(\End\left( \mF\right)\otimes\mA \right) $ may be
identified with $K_{i}\left( \mA\right) ,$ and the map $\phi ^{i
}:K_{i}\left( \mA \right) \rightarrow K_{i}\left( \End\left( \mF\right) \otimes \mA\right) $ becomes, with this identification,
\begin{equation*}
\left[ \mathcal{F}^{\vee }\right] :K_{i}\left( \mA\right) \rightarrow
K_{i}\left( \mA\right)
\end{equation*}%
from which the assertion follows.
\end{proof}

\begin{corollary}\label{mcorm}
Let $\mA $ be a sheaf of Azumaya algebras on a  scheme $X.$
Then, for all $i \geq 0$,
\begin{equation*}
\left[ \mA \right] \cdot \ZK[i]\left( \mA /X\right) =0=\left[
\mA \right] \cdot \CK[i]\left( \mA /X\right) .
\end{equation*}
In particular, if $\mA$ is free over $\mathcal{O}_X$ of rank $a$
then
 $\ZK[i]\left(  \mA/\mathcal{O}_{X}\right)$ and  $\CK[i]\left(  \mA/\mathcal{O}_{X}\right)$ are
 torsion abelian groups of exponent dividing $a$.
\end{corollary}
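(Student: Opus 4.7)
The plan is to deduce both vanishing statements as immediate specializations of Proposition~\ref{str}; no new argument is needed, only a careful matching of the symbols.

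For the kernel statement $[\mA]\cdot \ZK[i](\mA/X)=0$, I would apply the first assertion of Proposition~\ref{str} with the placeholder algebra called $\mA$ there taken to be $\mathcal{O}_X$ and the one called $\mB$ there taken to be our Azumaya algebra $\mA$. Both are locally free $\mathcal{O}_X$-modules of finite type, so the hypotheses hold. Since $\mathcal{O}_X\otimes\mA\cong\mA$, the conclusion of the proposition reads exactly $[\mA]\cdot \ZK[i](\mA/\mathcal{O}_X)=0$.

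For the cokernel statement $[\mA]\cdot \CK[i](\mA/X)=0$, I would instead invoke the second assertion of Proposition~\ref{str}: here the algebra in the first slot is required to be Azumaya, which is precisely our hypothesis on $\mA$, and I take the algebra in the second slot $\mB$ to be $\mathcal{O}_X$. Since $\mA\otimes\mathcal{O}_X\cong\mA$, the conclusion becomes $[\mA]\cdot \CK[i](\mA/\mathcal{O}_X)=0$, as required.

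For the ``in particular'' clause, I would argue that if $\mA$ is free of rank $a$ as an $\mathcal{O}_X$-module, then in the ring $K_0(X)$ we have $[\mA]=a\cdot[\mathcal{O}_X]$, and $[\mathcal{O}_X]$ is the unit of $K_0(X)$ acting as the identity on $K_i(X)$ and, through this, on the sub- and quotient-groups $\ZK[i](\mA/\mathcal{O}_X)$ and $\CK[i](\mA/\mathcal{O}_X)$. Therefore multiplication by $[\mA]$ on these groups is simply multiplication by the integer $a$, and the two vanishing statements above translate to the claim that these abelian groups are annihilated by $a$, hence torsion of exponent dividing $a$. There is no real obstacle in this corollary; the content is packaged entirely inside Proposition~\ref{str}, and the only thing to verify is that the $K_0(X)$-module structures used there descend to $\ZK[i]$ and $\CK[i]$, which is exactly what the factorization diagrams in the proof of that proposition establish.
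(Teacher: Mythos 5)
Your proposal is correct and is exactly the intended argument: the paper states Corollary~\ref{mcorm} without a separate proof, treating it as an immediate specialization of Proposition~\ref{str}, and your choices (first assertion with the pair $(\mathcal{O}_X,\mA)$ for the kernel, second assertion with the pair $(\mA,\mathcal{O}_X)$ for the cokernel, then $[\mA]=a[\mathcal{O}_X]$ in the free case) are precisely the right ones. Nothing is missing.
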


\begin{remark}
Let $A$ be an Azumaya algebra free over its center $R$, of rank $n^2$.
By Corollary~\ref{mcorm}, $$K_{i}(A)\otimes\mathbb Z [1/n]\cong K_{i}(R)\otimes
\mathbb Z[1/n].$$ For the case of a division algebra over a field, this was proved in
Green, et al. \cite{ghr} using the Skolem-Noether theorem, a result of Dawkins
and Halperin on direct limits of division algebras along with several facts
from $K$-theory (see also \cite{hmillar}).
\end{remark}

While this gives us some control over the situation for a general
commutative ring or scheme, it still leaves open the question of
what multiplication by $\left[  \mA\right]  :K_{i}\left(  X\right)
\rightarrow K_{i}\left(  X\right) $ looks like.  We would like to
find a bound when $\mA$ is
 not free over $\mathcal{O}_{X}$.
 The following result which provides some information should be compared to a result of
 Bass (\cite{bass1}, Corollary 16.2).

\begin{proposition}\label{unit}
Let $X$ be a noetherian scheme of dimension $d$ with an ample sheaf. If
$\mathcal{F}$ is a locally free sheaf of rank $f,$ then there is
$Z\in K_{0}\left( X\right) $ such that $\left[ \mathcal{F}\right]
\cdot Z=f^{m}$ for some integer $m\leq d+1$. In particular
\begin{equation*}
\Ker \big ( \left[ \mF \right] :K_{i}\left( X \right ) \rightarrow
K_{i}\left( X\right) \big)
\end{equation*}%
is a torsion group of exponent dividing $f^{m}.$
\end{proposition}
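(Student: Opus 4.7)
The plan is to reduce the statement to a nilpotence assertion for the augmentation ideal $I \subseteq K_0(X)$, mirroring Bass's original argument for rings. Since $X$ is connected, the rank map $\operatorname{rk}\colon K_0(X) \to \Z$ is a ring homomorphism and $\xi := [\mathcal{F}] - f$ lies in $I = \ker(\operatorname{rk})$. The essential input, and the only substantive piece of the proof, is that the hypotheses of the proposition force $I^{d+1} = 0$: ampleness of the sheaf lets one generate $K_0(X)$ by line bundle classes so that $I$ is generated by elements $[\mathcal{L}] - 1$, and a topological/coniveau filtration argument (equivalently, the $\gamma$-filtration in the sense of SGA~6) then bounds the nilpotence index by $d+1$ from the dimension of $X$. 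This is precisely the scheme-theoretic analogue of Bass's Corollary~16.2 cited in the text.

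Granted $\xi^{d+1} = 0$, the first assertion is a one-line binomial manipulation in the commutative ring $K_0(X)$:
\begin{equation*}
0 \;=\; \sum_{k=0}^{d+1} \binom{d+1}{k} (-f)^{d+1-k} [\mathcal{F}]^k \;=\; (-f)^{d+1} \;+\; [\mathcal{F}] \cdot \sum_{k=1}^{d+1} \binom{d+1}{k}(-f)^{d+1-k}[\mathcal{F}]^{k-1}.
\end{equation*}
Solving for the constant term exhibits an explicit $Z \in K_0(X)$ with $[\mathcal{F}] \cdot Z = f^{d+1}$, so the bound $m = d+1$ is attained.

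The "in particular" clause is then formal from the graded $K_0(X)$-module structure on $K_\bullet(X)$ coming from the tensor product: for any $x \in K_i(X)$ annihilated by $[\mathcal{F}]$ one has $f^{d+1} x = ([\mathcal{F}]\cdot Z)\cdot x = Z \cdot ([\mathcal{F}]\cdot x) = 0$, so $\Ker([\mathcal{F}]\colon K_i(X) \to K_i(X))$ has exponent dividing $f^{d+1}$. The principal obstacle, and the only place where real work is required, is pinning down the nilpotence of $I$ with the sharp index $d+1$; both the ampleness hypothesis (to access line-bundle generators of $K_0(X)$) and the dimension bound (to truncate the filtration) enter there, after which everything becomes formal.
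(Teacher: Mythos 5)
Your proposal is correct and is essentially the paper's own argument: the substantive input $I^{d+1}=0$ for the augmentation ideal is exactly what the paper cites from Fulton--Lang (Riemann--Roch algebra, V Corollary~3.10) rather than re-deriving via the $\gamma$-filtration as you sketch, and the rest --- expanding $([\mathcal F]-f)^{d+1}=0$ to solve for $f^{d+1}=[\mathcal F]\cdot Z$ and then using the $K_0(X)$-module structure on $K_i(X)$ --- matches the paper's proof step for step.
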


\begin{proof}
Recall,  (\cite{fultonlang}, V Corollary~3.10), that $I^{d+1}=0$ if $I:=\Ker\left[ rk:K_{0}\left(
X \right ) \rightarrow \mathbb{Z}\right]$. Since $X$ is connected,
$\mF$ has constant rank $f$ and so $\left( f-\left[ \mF\right]
\right) \in I$. Hence $\left( f-\left[ \mF\right] \right) ^{m}=0$
for some $m\leq d+1$. Expanding this product and moving $f^{m}$ to
the other side gives the equation
\begin{eqnarray*}
f^{m} &=&\left[ \mF\right] \left( m f^{m-1}+\cdots -(-1)^m\left[ \mF
\right]
^{m-1}\right) \in K_{0}\left( X\right) . \\
&&
\end{eqnarray*}%
Let $Z=\left( m f^{m-1}+\cdots -(-1)^m\left[ \mF \right]
^{m-1}\right)$.
\end{proof}
\begin{remarks}\hfill
\begin{enumerate}
\item If $X$ is not noetherian and finite dimensional but has an ample sheaf, then $X$ can be written as a limit of finite dimensional, noetherian schemes with an ample sheaf, $\mF$ will be defined on one of them, and so the result will still hold but without an explicit bound on $m$.
         \item In the affine case, $X=\Spec\left ( R \right )$, we can say a little
more. Bass (\cite{bass1}, Proposition 15.6) has shown that if $x\in
K_{0}\left( R\right) $ and $rk\left( x\right) \geq d,$ then
$x=\left[ Q\right] $ for some projective module $Q$ where
$d=dim\left( Max\left ( R \right ) \right )$ and if $\left[
Q_{1}\right] =\left[ Q_{2}\right] \in K_{0}\left( R\right) $ and
$rk\left[ Q_{1}\right]
>d,$ then $Q_{1}\cong Q_{2}.$ As a consequence if $rk \left ( Z \right )\geq d$, e.g.
$f^{m-1} \geq d$, then there is a projective $R$ module $Q$ such
that $\mF\otimes Q \cong R^{f^{m}}$.
\end{enumerate}
\end{remarks}

This gives the following result.
\begin{proposition}\label{tors}
Let $X$ be a connected, noetherian scheme of dimension $d$ with an
ample sheaf. If $\mA$ and $\mB$ are sheaves of Azumaya algebras on $X$ which are
locally free of rank $a$ and $b$ as $\mO_{X}$-modules, respectively, 
then, for all $i$, $\ZK[i](\mA\otimes \mB /\mA)$ and $\CK[i](\mA \otimes \mB/\mA)$ are torsion groups of exponent dividing $b^m$ for some integer $m \leq d+1$. In particular,   
$\ZK[i]\left(\mA/X\right) $ and $\CK[i]\left(\mA/X\right) $ are
torsion groups of exponent dividing $a^m$ for some integer $m\leq d+1.$
\end{proposition}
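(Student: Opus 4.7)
My plan is to first establish the assertion about $\ZK[i](\mA\otimes\mB/\mA)$ and $\CK[i](\mA\otimes\mB/\mA)$, and then obtain the ``in particular'' statement by specializing to the case where one of the two Azumaya algebras is $\mO_X$ itself.

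The first step is to show that $[\mB]\in K_0(X)$ annihilates both groups. For the kernel, this is exactly the content of Proposition~\ref{str}. For the cokernel, I would use the symmetry $\mA\otimes\mB\cong\mB\otimes\mA$ of $\mO_X$-algebras; since $\mB$ is itself Azumaya, the cokernel assertion of Proposition~\ref{str} with the roles of $\mA$ and $\mB$ exchanged gives that $[\mB]$ annihilates $\CK[i](\mB\otimes\mA/\mA)=\CK[i](\mA\otimes\mB/\mA)$. Both groups carry a natural $K_0(X)$-module structure, induced by tensoring over $\mO_X$ with locally free sheaves, and the change-of-scalars map $K_i(\mA)\to K_i(\mA\otimes\mB)$ is $K_0(X)$-linear, so this module structure descends to the kernel and cokernel.

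The second step is to apply Proposition~\ref{unit} to the locally free sheaf $\mB$ of rank $b$, producing an element $Z\in K_0(X)$ and an integer $m\leq d+1$ with $[\mB]\cdot Z = b^m$ in $K_0(X)$. For any element $x$ in a $K_0(X)$-module on which $[\mB]$ acts as zero, we then have
\begin{equation*}
b^m\cdot x = ([\mB]\cdot Z)\cdot x = Z\cdot([\mB]\cdot x) = 0,
\end{equation*}
so both $\ZK[i](\mA\otimes\mB/\mA)$ and $\CK[i](\mA\otimes\mB/\mA)$ are torsion of exponent dividing $b^m$.

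Finally, the ``in particular'' clause follows by specializing the first part to the trivially Azumaya algebra $\mO_X$ in place of $\mA$ and the given $\mA$ (of rank $a$) in place of $\mB$; since $\mO_X\otimes\mA\cong\mA$, this immediately yields that $\ZK[i](\mA/X)$ and $\CK[i](\mA/X)$ have exponent dividing $a^m$ for some $m\leq d+1$. I do not foresee a substantive technical obstacle: the essential point is combining the Morita-theoretic annihilation by the class $[\mB]$ coming from Proposition~\ref{str} with the nilpotence of the augmentation ideal of $K_0(X)$ encoded in Proposition~\ref{unit}, in order to pass from annihilation by a class of virtual rank $b$ to annihilation by the integer $b^m$.
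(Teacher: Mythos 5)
Your proposal is correct and follows essentially the same route as the paper: annihilation of both groups by $[\mB]$ via Proposition~\ref{str} (with the role-swap $\mA\leftrightarrow\mB$ for the cokernel, which the paper leaves implicit), combined with Proposition~\ref{unit} to replace $[\mB]$ by the integer $b^m$, and then specialization to $\mO_X\to\mA$ for the last clause. Your explicit remarks on the $K_0(X)$-module structure of the kernel and cokernel usefully fill in details the paper's one-line proof omits.
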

\begin{proof}
By Proposition~\ref{str}, the groups $\ZK[i](\mA\otimes \mB /\mA)$ and $\CK[i](\mA \otimes \mB/\mA)$ are annihilated by $[\mB]\in K_0(X)$. But by Proposition~\ref{unit}, there is a $Z\in K_{i}\left( X\right)$ such that $\left[\mB\right]\cdot Z=b^{m}$ with $m\le d+1$. The second statement is now immediate. 
\end{proof}
The following corollary shows that the groups $\ZK[i]$ and $\CK[i]$ respect the ``primary decomposition". 
\begin{corollary}
Let $\mA$ and $\mB$ be sheaves of Azumaya algebras on a scheme $X$ as in Proposition~\ref{tors}. If $a=rank(\mA)$ is
relatively prime to $b=rank(\mB),$ then $\ZK[i]\left( \mA\otimes \mB/X\right)
\cong \ZK[i]\left( \mA/X\right) \oplus \ZK[i]\left( \mB/X\right) $ and $%
\CK[i]\left( \mA\otimes \mB/X\right) \cong \CK[i]\left( \mA/X\right) \oplus
\CK[i]\left( \mB/X\right) $.
\end{corollary}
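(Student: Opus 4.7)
The plan is to apply Proposition~\ref{exact} twice---once to the composite $\mathcal{O}_X \to \mA \to \mA\otimes\mB$ and once to $\mathcal{O}_X \to \mB \to \mA\otimes\mB$---and then to compare the resulting primary decompositions of $\ZK[i](\mA\otimes\mB/X)$, using coprimality of $a$ and $b$ to force everything in sight to split.

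For the first composite, the hypotheses of Proposition~\ref{exact} hold because $\mA$ is $\mathcal{O}_X$-locally free and, $\mB$ being $\mathcal{O}_X$-locally free, $\mA\otimes\mB$ is locally free as both a left and a right $\mA$-module. The proposition yields the six-term exact sequence
\begin{align*}
0 \to \ZK[i](\mA/X) &\to \ZK[i](\mA\otimes\mB/X) \to \ZK[i](\mA\otimes\mB/\mA) \\
&\to \CK[i](\mA/X) \to \CK[i](\mA\otimes\mB/X) \to \CK[i](\mA\otimes\mB/\mA) \to 0.
\end{align*}
Proposition~\ref{tors} tells me that the two outer $\mA/X$ groups have exponent dividing $a^m$, while the two middle $\mA\otimes\mB/\mA$ groups have exponent dividing $b^m$ (for a common $m\le d+1$). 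Since $\gcd(a^m,b^m)=1$, the connecting homomorphism $\ZK[i](\mA\otimes\mB/\mA)\to\CK[i](\mA/X)$ is a map from a $b^m$-torsion group to an $a^m$-torsion group, hence vanishes. The six-term sequence therefore decomposes into two short exact sequences of the form $0\to A\to E\to B\to 0$ with $A$ and $B$ of coprime exponent; each such sequence splits canonically (a Bezout relation $1=ra^m+sb^m$ produces a section, and the $a^m$- and $b^m$-torsion subgroups of $E$ map isomorphically onto $A$ and $B$). I conclude
\[
\ZK[i](\mA\otimes\mB/X) \cong \ZK[i](\mA/X) \oplus \ZK[i](\mA\otimes\mB/\mA),
\]
which realizes $\ZK[i](\mA/X)$ as the $a$-primary component of $\ZK[i](\mA\otimes\mB/X)$, and analogously for $\CK[i]$.

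Next, I would run the mirror argument on $\mathcal{O}_X \to \mB \to \mA\otimes\mB$, whose hypotheses hold by symmetry. This yields
\[
\ZK[i](\mA\otimes\mB/X) \cong \ZK[i](\mB/X) \oplus \ZK[i](\mA\otimes\mB/\mB),
\]
where now $\ZK[i](\mB/X)$ is the $b$-primary component. Matching the two splittings via the uniqueness of primary decomposition produces the desired isomorphism
\[
\ZK[i](\mA\otimes\mB/X) \cong \ZK[i](\mA/X) \oplus \ZK[i](\mB/X),
\]
and the same procedure applied to the bottom $\CK[i]$ row gives the cokernel statement.

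The only step with real content is the coprime-torsion splitting lemma, which is entirely elementary. Everything else is a formal combination of Propositions~\ref{exact} and \ref{tors}; the only bookkeeping hazard is to ensure that the $a$-primary (respectively $b$-primary) components extracted from the two six-term sequences are compatibly identified, which is automatic by the uniqueness of primary decomposition in a torsion abelian group whose exponent factors into coprime parts.
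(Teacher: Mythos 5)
Your proposal is correct and follows essentially the same route as the paper: apply the six-term sequence of Proposition~\ref{exact} to $\mathcal{O}_X \to \mA \to \mA\otimes\mB$, observe via Proposition~\ref{tors} that the outer groups are $a$-power torsion and the middle groups $b$-power torsion so that the connecting map vanishes, split the resulting coprime-exponent extensions, and then interchange the roles of $\mA$ and $\mB$ to identify both primary components. The paper's proof is just a terser version of exactly this argument (citing Propositions~\ref{str} and \ref{unit} directly rather than Proposition~\ref{tors}, which packages them), so there is nothing to add.
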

\begin{proof}
In the six term exact sequence of Proposition~\ref{exact} associated to $O_{X}\rightarrow
\mA\rightarrow \mA\otimes \mB,$ the first and fourth groups are $a^{n}$ torsion
while the third and sixth groups are $b^{n}$ torsion for $n$ sufficiently
large by Propositions~\ref{str} and \ref{unit}. Hence 
$\ZK[i]\left( \mA\otimes \mB/\mA\right) \rightarrow \CK[i]\left(
\mA/X\right) $ is zero and $\ZK[i]\left( \mA/X\right) $ and $\CK[i]\left(
\mA/X\right) $ are the $a$ primary components of  $\ZK[i]\left( \mA\otimes
\mB/X\right) $ and $\CK[i]\left( \mA\otimes \mB/X\right) $ respectively. The
same argument deals with the $b$ primary part by interchanging $A$ and $\mB$.
\end{proof}
There is an alternative argument using the Mayer-Vietoris exact
sequence to piece together the local results from
Corollary~\ref{mcorm} into a global one. Although for this formulation we don't need the existence of an ample sheaf on $X$, we are, however, forced to restrict our attention to regular, noetherian
schemes. We use the following counting lemma.

\begin{lemma}\label{counting}
Let $A_{i}\overset{f_{i}}{\longrightarrow}B_{i}, 1\leq i \leq5, $ be a
family of maps between exact sequences of abelian groups giving rise
to a commutative diagram:
\begin{equation}
\begin{split}
\xymatrix{
&\cdots \ar[r]& A_{1}  \ar[r]\ar[d]^{f_1} &  A_{2} \ar[r] \ar[d]^{f_2}& A_{3} \ar[r] \ar[d]^{f_3}& A_{4} \ar[r] \ar[d]^{f_4}& A_{5} \ar[r] \ar[d]^{f_5}&   \cdots\\
&\cdots \ar[r] & B_{1} \ar[r] &  B_{2} \ar[r] &
B_{3} \ar[r] & B_{4} \ar[r] & B_{5} \ar[r]&
\cdots}
\end{split}
\end{equation}
Suppose that $m_{i},n_{i}$ annihilate $\Ker\left(f_{i}\right)
,\Cok\left(f_{i}\right)$ respectively for $i=1,2,4,5.$ Then
$\Ker\left(  f_{3}\right)  $ and $\Cok\left(f_{3}\right)$ are
annihilated by $m_{2}n_{1}m_{4}$ and $n_{2}m_{5}n_{4}$ respectively.
\end{lemma}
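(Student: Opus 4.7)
The statement is a quantitative refinement of the classical five lemma: in the five lemma one shows that if the four outer vertical maps are isomorphisms then so is the middle, while here one tracks the ``failure of the outer maps to be isomorphisms" (measured by the annihilators $m_i, n_i$) and follows it through the diagram chase to produce an explicit annihilator for $\Ker(f_3)$ and $\Cok(f_3)$. My plan is to perform two parallel chases, one for the kernel and a formally dual one for the cokernel; each chase traverses the diagram ``once to the right and once down, then back" and records which outer annihilator must be invoked at each step. The order in which the factors appear in the products $m_2 n_1 m_4$ and $n_2 m_5 n_4$ will be dictated by this order of invocation.

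For the kernel bound, start with $a_3 \in \Ker(f_3)$. Commutativity of the square between columns $3$ and $4$ places the image of $a_3$ in $A_4$ inside $\Ker(f_4)$, so it is killed by $m_4$; hence $m_4 a_3$ dies in $A_4$, and exactness of the top row lifts it to some $a_2 \in A_2$. Pushing $a_2$ down to $B_2$ and then right to $B_3$ yields $f_3(m_4 a_3) = 0$, so $f_2(a_2)$ comes from some $b_1 \in B_1$. The hypothesis that $n_1$ annihilates $\Cok(f_1)$ gives $n_1 b_1 = f_1(a_1)$ for some $a_1 \in A_1$, and subtracting the image of $a_1$ from $n_1 a_2$ produces an element of $\Ker(f_2)$, which is annihilated by $m_2$. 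Mapping the resulting identity $m_2 n_1 a_2 \in \Imm(A_1 \to A_2)$ onward into $A_3$ kills the right-hand side by exactness, leaving $m_2 n_1 m_4 \cdot a_3 = 0$.

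The cokernel bound is obtained by running the same argument in the opposite direction. Given $b_3 \in B_3$, push to $B_4$, use $n_4$ to lift to some $a_4 \in A_4$, push $a_4$ on to $A_5$ and observe that commutativity places its image in $\Ker(f_5)$, which is killed by $m_5$; exactness of the top row then pulls $m_5 a_4$ back to some $a_3 \in A_3$. The discrepancy $m_5 n_4 b_3 - f_3(a_3)$ lies in $\Imm(B_2 \to B_3)$, coming from some $b_2 \in B_2$, and the annihilator $n_2$ on $\Cok(f_2)$ lifts $n_2 b_2$ to an element of $A_2$ whose image in $A_3$, when combined with $n_2 a_3$, provides a preimage of $n_2 m_5 n_4 b_3$ under $f_3$.

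The proof is entirely formal and there is no single hard step; the only real obstacle is bookkeeping. One must be disciplined about which square is used at each transition (so the correct annihilator $m_i$ or $n_i$ enters), and one must sequence the invocations so that the exactness identities ``$\Imm = \Ker$" can be applied each time a lift is needed. Once the scheme above is fixed, each equality is a direct consequence of commutativity of one square together with exactness of one row.
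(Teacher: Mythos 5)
Your chase is correct and gives exactly the stated annihilators $m_2n_1m_4$ and $n_2m_5n_4$; the paper's proof consists solely of the remark that this is a straightforward diagram chase, so your argument is precisely the intended one, with the details written out.
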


\begin{proof}
This is a straightforward diagram chase. \qedhere
\end{proof}

We then apply this to our setting.
\begin{lemma}\label{mayer}
Let $X$ be a noetherian scheme and $\mA$ a sheaf of Azumaya algebras
on $X$. Suppose there is a covering of $X$ by open sets $U_{t}$ such
that $\mA$ is free of rank $a$ on each $U_{t}.$ Let $V_{k}=$
$\cup_{t=1}^{k}U_{t}.$ Then $\ZG[i]\left(
\mA\mid_{V_{k}}/V_{k}\right) $ and $\CG[i]\left(
\mA\mid_{V_{k}}/V_{k}\right)  $ are torsion abelian groups of
exponent dividing $a^{k(k-1)+1}$ and $ a ^{2k-1}$ respectively. If
$X$ is a
regular, noetherian scheme, the same bounds apply to $\ZK[i]\left(  \mA\mid_{V_{k}}%
/V_{k})\right)  $ and $\CK[i]\left(  \mA\mid_{V_{k}%
}/V_{k}\right)  .$
\end{lemma}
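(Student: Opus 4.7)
The plan is induction on $k$. The base case $k=1$ is immediate from Corollary~\ref{mcorm} (whose Morita-theoretic proof carries over verbatim to $G$-theory): $V_1=U_1$, $\mA|_{U_1}$ is free of rank $a$, so $\ZG[i](\mA|_{V_1}/V_1)$ and $\CG[i](\mA|_{V_1}/V_1)$ are annihilated by $a=a^{1\cdot 0+1}=a^{2\cdot 1-1}$. For the inductive step, set $V=V_{k-1}$, $U=U_k$ and $W=V\cap U$, and apply Quillen's localization theorem to the open cover $V_k=V\cup U$ to produce Mayer--Vietoris long exact sequences in $G$-theory both for coherent $\mO$-modules and for coherent $\mA$-modules; the vertical maps induced by $-\otimes\mA$ splice them into a commutative ladder of exact sequences, and I apply Lemma~\ref{counting} to this ladder at the central vertical arrow $G_i(V_k)\to G_i(\mA|_{V_k})$, whose four flanking positions are $G_{i+1}(W)$, $G_{i+1}(V)\oplus G_{i+1}(U)$, $G_i(V)\oplus G_i(U)$ and $G_i(W)$.

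The crucial observation is that $W\subseteq U_k$, so $\mA|_W$ (like $\mA|_{U_k}$) is itself free of rank $a$ over its base; hence by Corollary~\ref{mcorm} all the $\ZG[j]$ and $\CG[j]$ groups at $W$ and at $U$ have exponent dividing $a$. The inductive hypothesis supplies $a^{(k-1)(k-2)+1}$ and $a^{2k-3}$ for the kernel and cokernel at $V=V_{k-1}$; kernels and cokernels at the direct-sum positions are annihilated by the least common multiple of the summand annihilators, so they inherit these same two exponents. Feeding the resulting data into Lemma~\ref{counting} yields
\begin{equation*}
a\cdot a^{2k-3}\cdot a^{(k-1)(k-2)+1}=a^{k(k-1)+1} \qquad\text{and}\qquad a\cdot a\cdot a^{2k-3}=a^{2k-1}
\end{equation*}
for $\ZG[i]$ and $\CG[i]$ at $V_k$, completing the induction in $G$-theory. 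It is precisely the fact that $\mA|_W$ is free, hence contributes only $a^1$, that makes the bound grow quadratically rather than geometrically in $k$.

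For the $K$-theory assertion, when $X$ is regular and noetherian so is every open $Y\subseteq X$, and Quillen's resolution theorem identifies $K_i(Y)\cong G_i(Y)$; the analogous identification $K_i(\mA|_Y)\cong G_i(\mA|_Y)$ holds because the \'etale-local matrix-algebra structure of $\mA$ together with regularity of $Y$ forces every coherent $\mA|_Y$-module to admit a finite resolution by coherent $\mA|_Y$-modules that are locally free over $\mO_Y$. Under these identifications the $G$-theoretic ladder and its bounds transfer verbatim to $\ZK[i]$ and $\CK[i]$. The main technical point requiring care in this whole argument is this last $K\cong G$ identification in the presence of an Azumaya algebra on a regular base; everything else is a purely formal consequence of Lemma~\ref{counting} combined with Mayer--Vietoris and the observation that $W$ lies inside the distinguished open $U_k$.
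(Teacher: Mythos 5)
Your proof is correct and follows essentially the same route as the paper's: induction on $k$ via the Mayer--Vietoris ladder in $G$-theory, Lemma~\ref{counting} applied at the central arrow, the key observation that $V_{k-1}\cap U_k\subseteq U_k$ makes $\mA$ free there (contributing only $a^1$ and producing the recursions $z(k)=z(k-1)+c(k-1)+1$, $c(k)=c(k-1)+2$), and the identification of $K$- with $G$-theory in the regular case. The arithmetic matches the paper's solution of the same recurrences.
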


\begin{proof}
We use induction, the Mayer-Vietoris sequence for $G$-theory
(\cite{srinivas}, 5.16), and Lemma~\ref{counting} to establish this
result for $G$-theory. The argument leading up to
Corollary~\ref{mcorm} works equally well for $G$-theory and shows
that $\ZG[{i}]\left( \mA|_{U_{t}}/U_{t})\right)  $ and $\CG[{i
}]\left( \mA|_{U_{t}}/U_{t}\right)  $ are torsion abelian groups
whose exponent divides $a.$ Then we apply Lemma~\ref{counting} to
the map between
Mayer-Vietoris sequences for $G_{i}\left(  -\right)  $ and $G_{i}%
\left(\mA|_{ - }\right)$ below. We are in the following setting
\begin{equation*}
\begin{split}
\xymatrix{ G_{i+1}\left(  V_{k}\right)  \oplus G_{i+1}\left(
U\right) \ar[r] \ar[d]&  G_{i+1}\left(  V_{k}\cap U\right)
\ar[r]\ar[d]& G_{i}\left(V_{k+1} \right) \ar[r]\ar[d]& G_{i}\left(
V_{k}\right) \oplus G_{i}\left(  U\right)\ar[r]\ar[d]&
G_{i}\left(  V_{k}\cap U\right)\ar[d]\\
G_{i+1}\left(\mA|_{V_{k}}\right)  \oplus
G_{i+1}\left(\mA|_{U}\right) \ar[r]& G_{i+1}\left(\mA|_{V_{k}\cap
U}\right) \ar[r]& G_{i}\left(\mA|_{V_{k+1}} \right) \ar[r]&
G_{i}\left(\mA|_{V_{k}}\right)  \oplus
G_{i}\left(\mA|_{U}\right)\ar[r]& G_{i}\left(\mA|_{V_{k}\cap
U}\right)}
\end{split}
\end{equation*}
where $U=U_{k+1}$. Multiplication by $a$ annihilates the kernel and
cokernel of the second
and fifth columns since $\mA\mid_{V_{k}\cap U_{k+1}%
}\cong {\displaystyle\bigoplus\limits_{1}^{a}}
\mathcal{O}_{V_{k}\cap U_{k+1}}.$ Define $z\left( k\right) $ and
$c\left( k\right) $ to be the integer exponents resulting from
applying Lemma \ref{counting} to this diagram so that
\begin{equation*}
a^{z\left( k\right) }\ZG[i]\left(
\mA\mid_{V_{k}}/\mathcal{O}_{V_{k}}\right)  =0=a^{c\left( k\right)
}\CG[i]\left( \mA\mid_{V_{k}}/\mathcal{O}_{V_{k}}\right)  .
\end{equation*}%
The resulting recursion relations become
\begin{equation}
z\left( k+1\right)=z\left( k\right)+c\left( k\right)+1
\end{equation}
and
\begin{equation}
c\left( k+1\right)=c\left( k\right)+2
\end{equation}
subject to the initial conditions $z\left( 1\right)=1$ and $c\left(
1\right)=1$. Thus $c\left( k\right)=2k-1$ and substituting into the
equation for $z\left( k+1\right)$ yields $z\left( k+1\right)=z\left(
k\right)+2k$. Consequently $z\left( k\right)=k(k-1)+1$.

For the second statement, when $X$ is a regular, noetherian scheme,
$G$-theory coincides with $K$-theory.
\end{proof}
\begin{corollary} \label{realmain}
Let $X$ be a noetherian scheme of dimension $d$ and $\mA$ a sheaf of
Azumaya algebras on $X$ such that $\mA$ is locally free of rank
$a$.Then $\ZG[{i}]\left( \mA/\mathcal{O}_{X}\right)$ and
$\CG[i]\left( \mA/\mathcal{O}_{X}\right)$ are torsion abelian groups
of exponent dividing $  a^{d(d+1)+1}$ and $ a^{2d+1}$ respectively.
If $X$ is also regular, then the same bounds apply to $\ZK[i]\left(
\mA/\mathcal{O}_{X}\right)$ and $\CK[i]\left(
\mA/\mathcal{O}_{X}\right)  .$
\end{corollary}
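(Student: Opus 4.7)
The plan is to apply Lemma~\ref{mayer} to an open cover of $X$ by exactly $d+1$ open subsets on each of which $\mathcal{A}$ is free of rank $a$. Substituting $k = d+1$ into the exponent bounds $a^{k(k-1)+1}$ and $a^{2k-1}$ of that lemma reproduces exactly the claimed $a^{d(d+1)+1}$ and $a^{2d+1}$. Thus the entire content of the corollary reduces to producing such a $(d+1)$-element trivializing cover.

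To construct the cover, I would stratify $X$ by the dimension of the closures of its points and argue by induction on $d$. The case $d=0$ is immediate, since then $X$ is a finite disjoint union of spectra of Artinian local rings on which $\mathcal{A}$ is necessarily free. For the inductive step, the generic points of the top-dimensional irreducible components of $X$ are finitely many and pairwise distinct, so pairwise disjoint affine neighborhoods $V_1, \dots, V_r$ on which $\mathcal{A}$ is free can be chosen (using that $\mathcal{A}$ is locally free of finite rank). Their disjoint union $U_{d+1} := \bigsqcup_j V_j$ has $\mathcal{A}\vert_{U_{d+1}}$ free of rank $a$, because rank is constant on a disjoint union of free pieces. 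The closed complement $Y := X \setminus U_{d+1}$ is a noetherian scheme of dimension at most $d-1$, to which the inductive hypothesis applies; thickening the resulting $d$ opens of $Y$ back to open subsets of $X$ on which $\mathcal{A}$ remains free completes the construction of the $(d+1)$-element cover.

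With such a cover in hand, I apply Lemma~\ref{mayer} with $V_k = U_1 \cup \cdots \cup U_k$ for $1 \leq k \leq d+1$, so that $V_{d+1} = X$, to obtain directly the asserted exponent bounds on $\ZG[i](\mathcal{A}/\mathcal{O}_X)$ and $\CG[i](\mathcal{A}/\mathcal{O}_X)$. For the second assertion, regularity of the noetherian scheme $X$ yields $G_i(X) = K_i(X)$ by Quillen's resolution theorem, and the analogous identification $G_i(\mathcal{A}) = K_i(\mathcal{A})$ holds because $\mathcal{A}$ is \'etale-locally a matrix algebra over $\mathcal{O}_X$ and so inherits finite global dimension from $X$. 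Hence the $G$-theoretic bounds transfer verbatim to the $K$-theoretic groups $\ZK[i](\mathcal{A}/\mathcal{O}_X)$ and $\CK[i](\mathcal{A}/\mathcal{O}_X)$.

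The main obstacle is the inductive thickening step in the cover construction: one must extend opens of the lower-dimensional closed subscheme $Y$ back to opens of $X$ in a manner preserving both the pairwise disjointness of affines within each stratum and the freeness of $\mathcal{A}$. Although such a $(d+1)$-element cover is folklore in noetherian dimension theory, the combinatorial bookkeeping in this thickening step is the technical heart of the argument; everything else is a direct appeal to Lemma~\ref{mayer} and the Quillen identification $G_i = K_i$ in the regular setting.
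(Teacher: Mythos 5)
Your strategy is the paper's: produce a cover of $X$ by $d+1$ open sets trivializing $\mA$, apply Lemma~\ref{mayer} with $k=d+1$ (which indeed yields the exponents $a^{d(d+1)+1}$ and $a^{2d+1}$), and use the identification of $G$-theory with $K$-theory over a regular base. The gap is precisely the step you flag as the ``technical heart'': the thickening. As you have organized the induction --- apply the inductive hypothesis to $Y=X\setminus U_{d+1}$ as a scheme in its own right, then extend the resulting opens of $Y$ to opens of $X$ --- the extension step fails, not for bookkeeping reasons but structurally: if $W$ is open in $Y$ and $\mA\vert_W$ is free, there need not exist any open $W'\subseteq X$ with $W'\cap Y=W$ and $\mA\vert_{W'}$ free. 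Local freeness produces trivializing neighborhoods of \emph{points}, not of arbitrary locally closed subsets; $\mA$ can be free on all of $Y$ without being free on any open neighborhood of $Y$ in $X$. The paper avoids this by never leaving $X$: the induction is really on the statement that every closed subset $Z\subseteq X$ of dimension $e$ is covered by $e+1$ open subsets \emph{of $X$} on which $\mA$ is free, applied to $Z=X\setminus U_1$, so no thickening is ever required.

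Reformulating this way isolates the one lemma you still must supply: the finitely many generic points of the irreducible components of a closed $Z\subseteq X$ admit a single common open neighborhood in $X$ trivializing $\mA$. Be careful here, because your top-level justification --- ``pairwise distinct, so pairwise disjoint affine neighborhoods can be chosen'' --- is not valid as stated (two distinct closed points of an irreducible curve have no disjoint neighborhoods). For $Z=X$ it is rescued by the fact that the components cover $X$, so the opens $W_i=X\setminus\bigcup_{j\neq i}X_j$ are genuinely pairwise disjoint, each can be shrunk to a trivializing open, and $\mA$ is free on a disjoint union of trivializing opens. For a proper closed $Z$ that disjointness argument is unavailable, and one needs a different device: for instance, when the generic points of $Z$ lie in a common affine open (automatic for affine or quasi-projective $X$), semilocalize at them, where the projective module becomes free, and spread a basis out to a principal open containing all of them. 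Nailing down this common-neighborhood lemma is the actual content of the cover construction; once it is in place, the substitution $k=d+1$ in Lemma~\ref{mayer} and the $G=K$ reduction via the finite global dimension of $\mA$ over a regular $X$ go through as you describe.
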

\begin{proof}
We need only to produce a covering of $X$ by $d+1$ open sets satisfying
the hypotheses of the Lemma~\ref{mayer}. We proceed by induction. If $d=0$, then
$X_{red}$ is a finite set of points and the result is obvious. Let
$\eta_{1},\ldots,\eta_{r}$ be generic points of each irreducible
component of $X.$ Then there is an open set $U_{1}$
containing $\eta_{1},\ldots,\eta_{r}$ such that $\mA\mid_{U_{1}}\cong%
{\displaystyle\bigoplus\limits_{1}^{a}}
\mathcal{O}_{U_{1}}$ as an $\mathcal{O}_{U_{1}}$-module. If
$U_{1}=X,$ we are done. Otherwise $\dim\left(  X-U_{1}\right)  \leq
d-1$, and so $X-U_{1}$ can be covered by $d$ open sets, $\left\{
U_{t}\right\} $ with $2\le t \le d+1$, satisfying the hypothesis of
the lemma. Adding $U_{1}$ to this collection finishes the induction
step.
\end{proof}

Recall the basic property of $K$-theory with coefficients, that for
any scheme $X$, an integer $m$ and $i\geq 1$,  there is a functorial
exact sequence
\begin{equation}
0\longrightarrow K_{i}\left(  X\right)  /m\longrightarrow
K_{i}\left( X,\mathbb{Z} /m\right)
\longrightarrow\,_{m}K_{i-1}\left( X\right)
\longrightarrow0\label{k/m}
\end{equation}

\begin{theorem}\label{main}
Let $X$ be a connected, noetherian scheme and $\mA$  a sheaf of
Azumaya algebras on $X$ such that $\mA$ is locally free of rank $a$. If $X$ has an ample sheaf or $X$ is regular, then $K_{i}(X,\mathbb Z/m)=K_{i}(\mA,\mathbb Z/m)$, for any $m$
relatively prime to $a$ and $i\geq 0$.
\end{theorem}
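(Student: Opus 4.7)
My strategy is to combine the torsion bounds on $\ZK[i](\mA/X)$ and $\CK[i](\mA/X)$ with the universal coefficient sequence (\ref{k/m}). Under either hypothesis, Proposition~\ref{tors} (ample-sheaf case) or Corollary~\ref{realmain} (regular case) produces an integer $N$ such that $a^N$ annihilates both $\ZK[i](\mA/X)$ and $\CK[i](\mA/X)$ for every $i \geq 0$. Since $\gcd(m, a) = 1$ forces $\gcd(m, a^N) = 1$, every $a^N$-torsion abelian group $T$ is uniquely $m$-divisible, and hence $T/m = 0 = {}_m T$.

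First I would split the four-term exact sequence (\ref{4term}) for the map $K_i(X) \to K_i(\mA)$ into the two short exact sequences
\begin{equation*}
0 \to \ZK[i](\mA/X) \to K_i(X) \to J_i \to 0, \qquad 0 \to J_i \to K_i(\mA) \to \CK[i](\mA/X) \to 0,
\end{equation*}
where $J_i$ denotes the image. Applying the long exact sequences arising from $\operatorname{Tor}_*(-,\mathbb Z/m)$ and $\Hom(\mathbb Z/m,-)$ to each of these two short exact sequences and using the vanishing just recorded, one obtains, stepwise via $J_i$,
\begin{equation*}
K_i(X)/m \xrightarrow{\;\sim\;} K_i(\mA)/m \qquad \text{and} \qquad {}_m K_i(X) \xrightarrow{\;\sim\;} {}_m K_i(\mA)
\end{equation*}
for every $i \geq 0$.

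Next I would apply the functoriality of (\ref{k/m}) to the structure map $\mO_X \to \mA$ to obtain, for each $i \geq 1$, a commutative diagram with exact rows
\begin{equation*}
\xymatrix{
0 \ar[r] & K_i(X)/m \ar[r] \ar[d] & K_i(X,\mathbb Z/m) \ar[r] \ar[d] & {}_m K_{i-1}(X) \ar[r] \ar[d] & 0 \\
0 \ar[r] & K_i(\mA)/m \ar[r] & K_i(\mA,\mathbb Z/m) \ar[r] & {}_m K_{i-1}(\mA) \ar[r] & 0
}
\end{equation*}
whose outer vertical arrows are isomorphisms by the previous paragraph; the five-lemma then identifies the middle arrow. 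The case $i = 0$ is already contained in the previous paragraph, since $K_0(-,\mathbb Z/m) = K_0(-)/m$.

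There is no real obstacle left once Proposition~\ref{tors} and Corollary~\ref{realmain} are in place; the theorem is a clean homological-algebra consequence of those torsion bounds. The only points that call for some care are factoring through the image $J_i$ before tensoring (since one has only a four-term sequence, not a long exact one), and handling $i = 0$ separately because the universal coefficient sequence (\ref{k/m}) is stated for $i \geq 1$.
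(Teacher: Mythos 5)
Your proposal is correct and follows essentially the same route as the paper: establish that $\ZK[i](\mA/X)$ and $\CK[i](\mA/X)$ are $a$-power torsion of bounded exponent (via Proposition~\ref{tors} in the ample case and the Mayer--Vietoris bounds in the regular case), split the four-term sequence through the image, deduce that $K_i(X)/m \to K_i(\mA)/m$ and ${}_mK_i(X) \to {}_mK_i(\mA)$ are isomorphisms, and conclude with the five lemma on the universal coefficient sequence (\ref{k/m}). The only cosmetic difference is that the paper first records $K_i(X)\otimes\Z[1/a]\cong K_i(\mA)\otimes\Z[1/a]$ and then runs a snake-lemma argument, whereas you go directly through the $\operatorname{Tor}(-,\Z/m)$ long exact sequences; your explicit treatment of $i=0$ is a small point the paper leaves implicit.
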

\begin{proof}
If $\mA$ is free over $X$ of rank $a$, it follows directly from
Corollary~\ref{mcorm} that $\ZK[i]\left( \mA/\mathcal{O}_{X}\right)$
and $\CK[i]\left( \mA/\mathcal{O}_{X}\right)$ are torsion groups of
bounded exponent $a$ (here, there is no need for any assumption on $X$). For a locally free Azumaya sheaf $\mA$, if $X$ has a finite dimension with an ample sheaf then by Proposition~\ref{tors}, $\ZK[i]\left( \mA/\mathcal{O}_{X}\right)$ and $\CK[i]\left(
\mA/\mathcal{O}_{X}\right)$ are torsion groups with exponent
dividing some power of $a$. If $X$ is regular, then
since $X$ is quasi-compact, it can
be covered by a finite number of open subsets $U_t$, such that $\mA$
is free of rank $a$ over $U_t$. Thus by Lemma~\ref{mayer},
$\ZK[i]\left( \mA/\mathcal{O}_{X}\right)$ and $\CK[i]\left(
\mA/\mathcal{O}_{X}\right)$ are torsion groups with exponent
dividing some power of $a$.

Now tensor the exact sequence
$$0 \rightarrow \ZK[i](\mA) \rightarrow K_i(X) \rightarrow K_i(\mA) \rightarrow \CK[i](\mA) \rightarrow 0,$$
with $\mathbb Z[1/a]$. Since $\CK[i](\mA)\otimes \mathbb Z[1/a]$ and
$\ZK[i](\mA)\otimes \mathbb Z[1/a]$ vanish, it follows that
\begin{equation}\label{green}
K_i(X)\otimes \Bbb Z[1/a] \cong K_i(\mA)\otimes \Bbb Z[1/a].
\end{equation}

Now from the exact sequence~(\ref{k/m}) we have
\begin{equation}
\begin{split}
\xymatrix{ 0 \ar[r] & K_i(X) \otimes \Bbb Z/m  \ar[r]\ar[d]
&K_i(X,\Bbb Z/m)
\ar[r]\ar[d] &{}_mK_{i-1}(X) \ar[r]\ar[d]& 0\\
0 \ar[r] & K_i(\mA) \otimes \Bbb Z/m \ar[r] &K_i(\mA,\Bbb Z/m)
\ar[r] &{}_mK_{i-1}(\mA) \ar[r]& 0.}
\end{split}
\end{equation}
Since $a$ and $m$ are relatively prime, one can show that the outer
vertical maps are isomorphisms, and therefore so is the middle
vertical map. For example, for the left vertical map, consider the
diagram below and use the snake lemma  for the two short exact
sequences. It follows that $K_i(X)\otimes \Bbb Z/m\cong
\Imm(\phi_i)\otimes \Bbb Z/m\cong K_i(\mA)\otimes \Bbb Z/m$.
\begin{equation}
\begin{split}
\xymatrix{
0 \ar[r]& \Ker(\phi_i) \ar[r] & K_i(X) \ar[dd]_{\eta_m} \ar[rr]^{\phi_i} \ar@{-->>}[dr]& &K_i(\mA) \ar[dd]^{\eta_m}\ar[r]& \Cok(\phi_i) \ar[r] & 0  \\
          &                             &                     & \Imm(\phi_i)  \ar@{^{(}-->}[ur] \ar@{-->}[dd]^<(.2){\cong} &                             &\\
0 \ar[r]& \Ker(\phi_i) \ar[r] & K_i(X) \ar[rr] \ar@{-->>}[rd]& &K_i(\mA) \ar[r]& \Cok(\phi_i) \ar[r] & 0.  \\
          &                             &                     & \Imm(\phi_i)  \ar@{^{(}-->}[ur]  &                             &\\
           }
\end{split}
\end{equation}
Hence $K_i(X,\Bbb Z/m)\cong K_i(\mA,\Bbb Z/m)$.
\end{proof}

\begin{corollary}
Let $(R,\mathrm m)$ be a Henselian local ring, and let $A$ be an Azumaya algebra over $R$ of rank $n^2$.
Then   $$K_{i}\left(  A ,\mathbb{Z}/d\right)
\cong K_{i}\left(  \overline A,\mathbb{Z}/d\right) $$ where $\overline A = A \otimes R/\mathrm m$,
$d$ is invertible in $R$ and $\left(  n,d\right)  =1$.
\end{corollary}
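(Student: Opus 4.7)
The plan is to combine Theorem~\ref{main}, applied separately to $R$ and to its residue field, with Gabber's rigidity theorem for $K$-theory with finite coefficients of a Henselian local ring.

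First I would observe that over a local ring every finitely generated projective module is free, so $A$ is free of rank $n^2$ over $R$, and $\overline A = A \otimes_R R/\mathfrak m$ is (an Azumaya algebra and hence) free of rank $n^2$ over the field $R/\mathfrak m$. Since $\Spec R$ and $\Spec (R/\mathfrak m)$ are affine they carry an ample sheaf, so Theorem~\ref{main} applies in both situations; using the coprimality $(n,d)=1$ one obtains natural isomorphisms
\[
K_i(R,\mathbb Z/d) \cong K_i(A,\mathbb Z/d) \quad\text{and}\quad K_i(R/\mathfrak m,\mathbb Z/d) \cong K_i(\overline A,\mathbb Z/d).
\]
(In fact, since $A$ is already free over $R$ one does not need the full strength of Theorem~\ref{main}: Corollary~\ref{mcorm} suffices, and the universal-coefficient-sequence argument at the end of the proof of Theorem~\ref{main} then gives both displayed isomorphisms directly.)

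Next I would invoke Gabber's rigidity theorem: for a Henselian local ring $(R,\mathfrak m)$ and any integer $d$ invertible in $R$, the reduction map $R \to R/\mathfrak m$ induces an isomorphism $K_i(R,\mathbb Z/d) \cong K_i(R/\mathfrak m,\mathbb Z/d)$ for all $i \geq 0$. By functoriality of $K$-theory with coefficients, the two rings and their Azumaya extensions fit into a commutative square whose horizontal arrows are the two isomorphisms displayed above and whose left vertical arrow is Gabber's isomorphism; the right vertical arrow $K_i(A,\mathbb Z/d) \to K_i(\overline A,\mathbb Z/d)$ is therefore forced to be an isomorphism as well.

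The only substantial external input is Gabber's rigidity theorem; the contribution of the present paper is simply to justify the passage from the commutative rings $R$ and $R/\mathfrak m$ to their Azumaya extensions $A$ and $\overline A$ on both sides of the square without disturbing the $\mathbb Z/d$-coefficient $K$-groups, which is exactly what the hypothesis $(n,d)=1$ achieves by killing the bounded $n$-torsion produced in Corollary~\ref{mcorm}. No step is genuinely difficult once rigidity is cited; the content is the commutativity and exactness of the three-term chain of isomorphisms.
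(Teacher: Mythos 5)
Your proposal is correct and follows essentially the same route as the paper: both apply Theorem~\ref{main} to $R$ and to $R/\mathfrak m$ for the horizontal isomorphisms, invoke Gabber's rigidity theorem for the left vertical map, and deduce the right vertical isomorphism from the commutative square. Your added observations (that $A$ is free over the local ring $R$, so Corollary~\ref{mcorm} already suffices) are a minor, accurate refinement rather than a different argument.
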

\begin{proof}
Consider the following commutative diagram where the horizontal arrows are isomorphisms by
Theorem~\ref{main} and the left vertical arrow by Gabber's theorem (see Theorem 1 in \cite{gabber}).
 \begin{equation}
\begin{split}
\xymatrix{
K_i(R,\Bbb Z /d) \ar[r]^{\cong}\ar[d]_{\cong}&  K_i(A,\Bbb Z /d)\ar[d]\\
K_i(R/\mathrm m,\Bbb Z /d) \ar[r]^{\cong}& K_i(\overline A,\Bbb Z /d)}
\end{split}
\end{equation}
The result now follows.
\end{proof}

Here is an amusing application of our results.

\begin{theorem}
Let $\mathcal{O}_{v}$ be a equicharacteristic dvr with residue field
$k$ and field of quotients $K.$ Suppose that $A_{v}$ is an Azumaya
$\mathcal{O}_{v}$
algebra with $A_{v}\otimes _{\mathcal{O}_{v}}K:=A$ and with residue algebra $%
A_{v}\otimes _{\mathcal{O}_{v}}k=A_{k}.$ Then there is a six term exact sequence%

\begin{eqnarray*}
0 &\longrightarrow &\ZK[i]\left(  A_{v}/\mathcal{O}_{v}\right) \longrightarrow
\ZK[i]\left( A/K\right) \longrightarrow \ZK[i-1]\left(  A_{k}/k  \right)  \\
&\longrightarrow &\CK[i]\left(A_{v}/\mathcal{O}_{v} \right) \longrightarrow
\CK[i]\left( A/K\right) \longrightarrow \CK[i-1]\left(A_{k}/k \right) \longrightarrow 0.
\end{eqnarray*}
for any \thinspace $i>0$.
\end{theorem}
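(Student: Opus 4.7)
The plan is to produce the six-term sequence by comparing the $K$-theoretic localization exact sequences for the triples $(\mathcal O_v,K,k)$ and $(A_v,A,A_k)$, linked by the comparison map induced by $-\otimes_{\mathcal O_v}A_v$.

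First I would establish the two localization long exact sequences. Since $\mathcal O_v$ is regular noetherian, $K_\bullet(\mathcal O_v)=G_\bullet(\mathcal O_v)$, and Quillen's localization theorem applied to the open immersion $\Spec K\hookrightarrow\Spec\mathcal O_v$ with closed point $\Spec k$ yields
$$\cdots\longrightarrow K_i(k)\longrightarrow K_i(\mathcal O_v)\longrightarrow K_i(K)\longrightarrow K_{i-1}(k)\longrightarrow\cdots.$$
For the Azumaya side, the full subcategory of finitely generated left $A_v$-modules annihilated by a power of $\pi$ is equivalent by dévissage to $A_k\mbox{-mod}$ (using $A_k=A_v/\pi A_v$), while the corresponding Serre quotient is $A\mbox{-mod}$. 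Quillen's theorem therefore delivers
$$\cdots\longrightarrow K_i(A_k)\longrightarrow K_i(A_v)\longrightarrow K_i(A)\longrightarrow K_{i-1}(A_k)\longrightarrow\cdots,$$
with $K_\bullet=G_\bullet$ throughout because $A_v$ is Azumaya over the regular ring $\mathcal O_v$ and hence has finite global dimension. Since $A_v$ is flat over $\mathcal O_v$, the exact functors $-\otimes_{\mathcal O_v}A_v$, $-\otimes_K A$ and $-\otimes_k A_k$ respect these supports and induce a commutative ladder of the two long exact sequences, with vertical maps $\phi_X^i:K_i(X)\to K_i(A_X)$ for $X\in\{\mathcal O_v,K,k\}$ (writing $A_X$ for $A_v,A,A_k$ respectively).

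Next I would extract the six-term sequence from this ladder. The slickest way is to view the rows as fibre sequences of $K$-theory spectra; the comparison morphism then induces a fibre sequence $F_k\to F_{\mathcal O_v}\to F_K$ on homotopy fibres $F_X:=\operatorname{hofib}(K(X)\to K(A_X))$, and the resulting long exact sequence on homotopy groups, combined with the natural short exact sequences
$$0\longrightarrow\CK[n+1](A_X/X)\longrightarrow \pi_nF_X\longrightarrow \ZK[n](A_X/X)\longrightarrow 0,$$
yields the claimed six-term sequence after a diagram chase. Equivalently, one can stay within abelian groups: the first, second, fourth, and fifth maps are induced in the obvious way by the localization map and the boundary $\partial$, while the connecting map $\ZK[i-1](A_k/k)\to\CK[i](A_v/\mathcal O_v)$ is produced by snake-lemma-style lifts, with exactness at each of the six spots verified by a direct diagram chase on the ladder.

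The main obstacle is this extraction step: because the rows of the ladder are long (rather than short) exact sequences, the classical snake lemma does not apply verbatim, so one must either invoke the spectrum-level fibre formalism or patch together short exact sequences built from the successive images in the localization sequences. Setting up the Azumaya localization sequence in the first step is routine but requires some care with the noncommutative dévissage.
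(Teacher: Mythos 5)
Your first step---the two Quillen localization sequences and the commutative ladder between them induced by $-\otimes_{\mathcal O_v}A_v$---is fine, but the extraction step is a genuine gap, and you have correctly located it without actually resolving it. The claimed six-term sequence does not follow formally from a map of \emph{long} exact sequences: neither the fibre sequence $F_k\to F_{\mathcal O_v}\to F_K$ together with the extensions $0\to\CK[n+1](A_X/X)\to\pi_nF_X\to\ZK[n](A_X/X)\to 0$, nor any patching of successive images, yields the stated sequence; at best one gets an exact couple/spectral sequence. Concretely, the very first assertion, injectivity of $\ZK[i](A_v/\mathcal O_v)\to\ZK[i](A/K)$, requires that an element of $K_i(\mathcal O_v)$ dying in both $K_i(A_v)$ and $K_i(K)$ be zero; from the ladder alone such an element is only known to be a transfer from $K_i(k)$, and nothing forces it to vanish. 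A telltale sign that something essential is missing is that your argument never uses the equicharacteristic hypothesis, whereas the higher Gersten-type statement it would prove is open for general mixed-characteristic dvr's.

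The missing ingredient is exactly what the paper invokes: Panin's theorem on the equicharacteristic case of the Gersten conjecture, valid both for an equicharacteristic regular local ring and for an Azumaya algebra over such a ring. For a dvr (one prime of height one) this says precisely that the transfer maps $K_i(k)\to K_i(\mathcal O_v)$ and $K_i(A_k)\to K_i(A_v)$ vanish, so your two localization sequences break up into short exact sequences $0\to K_i(\mathcal O_v)\to K_i(K)\to K_{i-1}(k)\to 0$ and $0\to K_i(A_v)\to K_i(A)\to K_{i-1}(A_k)\to 0$. Once the rows are short exact, the ordinary snake lemma applied to the vertical comparison map gives the six-term kernel--cokernel sequence immediately. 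So your setup is the right one, but you must add Panin's result for both $\mathcal O_v$ and $A_v$ before the snake lemma (or any diagram chase) becomes applicable.
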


\begin{proof}
Panin \cite{panin}  has shown that the Quillen-Gersten sequence is
exact for an equicharacteristic regular local ring or for an Azumaya
algebra over such a ring. Thus the two middle rows in the diagram
below are exact since a dvr has only one prime of height one and the
theorem follows from the snake
lemma.

\begin{equation}
\begin{split}
\xymatrix{ &0&0&0&\\
&\CK[{i}]\left(A_{v}/\mathcal{O}_{v}\right) \ar[u] \ar[r]&\CK[{i}]\left(A/K\right) \ar[u] \ar[r]&\CK[{i-1}]\left(A_{k}/k\right) \ar[u] &\\
0 \ar[r] &K_{i}\left( A_{v}\right) \ar[u] \ar[r] &K_{i}\left( A\right) \ar[u] \ar[r]&K_{i-1}\left(A_{k}\right) \ar[u] \ar[r]&0\\
0 \ar[r] &K_{i}\left( \mathcal{O}_{v}\right) \ar[u] \ar[r] &K_{i}\left( K\right) \ar[u] \ar[r]&K_{i-1}\left(k\right) \ar[u] \ar[r]&0\\
&\ZK[{i}]\left(A_{v}/\mathcal{O}_{v}\right) \ar[u] \ar[r]&\ZK[{i}]\left(A/K \right) \ar[u] \ar[r]&\ZK[{i-1}]\left(A_{k}/k\right) \ar[u] &\\
 &0 \ar[u] &0 \ar[u] &0 \ar[u] &\\
}
\end{split}
\end{equation}
\end{proof}

\begin{corollary}
Let $\mathcal{O}_{v}$ be an equicharacteristic dvr with residue
field $k$ and field of quotients $K.$ Let $A_v$ be an Azumaya algebra
of rank $n^{2}$ over $\mathcal{O}_{v}.$ Then
\begin{equation*}
\#\left( \CK[1]\left( A/K\right) \right) =\#\left( \CK[1]\left( A_{v}/\mathcal{%
O}_{v}\right) \right) \left( \frac{n}{\ind\left( A_{k}\right)
}\right) .
\end{equation*}%
In particular if $A$ is  a division algebra over $K$ and
$\CK[1]\left( A\right) =0,$ then $A_{k}$ is a division algebra.
\end{corollary}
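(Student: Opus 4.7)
The plan is to specialize the six-term exact sequence established in the preceding theorem to $i=1$ and to evaluate the two boundary $K_{0}$-terms explicitly by means of Morita theory.

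First I would observe that $A_{k}:=A_{v}\otimes_{\mathcal{O}_{v}}k$ is a central simple $k$-algebra of degree $n$ (since $A_{v}$ is Azumaya of rank $n^{2}$). Write $A_{k}\cong M_{r}(D)$ with $D$ a division $k$-algebra of degree $\ind(A_{k})$, so that $r=n/\ind(A_{k})$. Let $S$ denote the unique simple left $A_{k}$-module, so that $A_{k}\cong S^{r}$ as a left $A_{k}$-module. Under the Morita identification $K_{0}(A_{k})\cong K_{0}(D)\cong \Z$, where $[S]$ corresponds to $[D]=1$, one has $[A_{k}]=r$. The extension-of-scalars map $K_{0}(k)\rightarrow K_{0}(A_{k})$ sends $1\mapsto[A_{k}]$ and therefore identifies with multiplication by $r$ on $\Z$. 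Consequently
$$\ZK[0](A_{k}/k)=0 \qquad\text{and}\qquad \CK[0](A_{k}/k)\cong \Z/r\Z,$$
the latter being a cyclic group of order $n/\ind(A_{k})$.

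Substituting these computations into the six-term exact sequence of the previous theorem collapses it to a short exact sequence
$$0\longrightarrow \CK[1](A_{v}/\mathcal{O}_{v})\longrightarrow \CK[1](A/K)\longrightarrow \CK[0](A_{k}/k)\longrightarrow 0,$$
and taking cardinalities yields the stated identity. For the ``in particular'' part, if $A$ is a division algebra over $K$ and $\CK[1](A/K)=0$, then the surjectivity of the final map in the displayed sequence forces $\CK[0](A_{k}/k)=0$, so $n/\ind(A_{k})=1$. Since $A_{k}$ has degree $n$, this forces $\ind(A_{k})=n$, i.e., $A_{k}$ is a division algebra.

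The only substantive step is the Morita identification of the map $K_{0}(k)\rightarrow K_{0}(A_{k})$ as multiplication by $n/\ind(A_{k})$; the remainder is a formal consequence of the six-term sequence, so no serious obstacle is expected.
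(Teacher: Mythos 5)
Your proposal is correct and follows essentially the same route as the paper: specialize the six-term exact sequence of the preceding theorem to $i=1$, use Morita theory to identify $K_{0}(k)\rightarrow K_{0}(A_{k})$ with multiplication by $r=n/\ind(A_{k})$ on $\Z$ (so $\ZK[0](A_{k}/k)=0$ and $\CK[0](A_{k}/k)\cong\Z/r\Z$), and read off the cardinality identity from the resulting short exact sequence. The paper phrases the $K_{0}$ computation by first noting $K_{0}(L)\rightarrow K_{0}(D)$ is an isomorphism and invoking Proposition~\ref{str}, but this is the same calculation.
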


\begin{proof}
If $\ A$ is an Azumaya algebra of rank $n^2$ over a field $L$ such
that $A\cong
M_{r}\left( D\right) $ for some division algebra $D$ over $L,$ then $%
D\otimes _{L}-:K_{0}\left( L\right) \rightarrow K_{0}\left( D\right)
$ is an isomorphism. Consequently $A\otimes _{L}-:K_{0}\left(
L\right) \rightarrow
K_{0}\left( A\right) $ is an injection with cokernel of cardinality
$$r=\frac{%
n}{\ind\left( D\right) }$$
 by Proposition~\ref{str}.
\end{proof}

\begin{remark}\label{dkyo}
For a division algebra $D$ over its center $F$ of index $n$, thanks
to the Dieudonn\'{e} determinant, $\CK(D)=D^{\ast}/K^{\ast
}D^{\prime}$ where $D^{\ast}$ is the multiplicative group of $D$ and
$D^{\prime}$ is its commutator subgroup. This group has a direct
application in solving the open problem whether a multiplicative
group of a division algebra has a maximal subgroup. Indeed, since
$\CK(D)$ is torsion of bounded exponent (by Corollary~\ref{mcorm}),
if it is not trivial, it has maximal subgroups and therefore
$D^{\ast}$ has (normal) maximal subgroups. Thus finding the maximal
subgroups in $D^{\ast}$ reduces to the case that $\CK(D)$ is
trivial. It is a conjecture that $\CK(D)$ is trivial if and only if
$D$ is an ordinary quaternion division algebra over a Pythagorean
field. It was shown that such division algebras do have (non-normal)
maximal subgroups. Thus if the above conjecture is settled
positively, one concludes that the multiplicative group of a
division algebra does have a maximal subgroup (see \cite{hazwad} and
references there).
\end{remark}


\begin{thebibliography}{10000}

\bibitem[B]{bass1} H. Bass, \emph{K-Theory and stable algebras}, Publ. IHES, {\bf 22} (1964), 5--60.



\bibitem[FL]{fultonlang} W. Fulton, S. Lang, Riemann-Roch algebra, Springer-Verlag, New York, 1985.

\bibitem[CW]{webcor}G. Corti\~{n}as, C. Weibel, \emph{Homology of Azumaya
algebras}, Proc. AMS, \textbf{121}, no. 1, (1994), 53--55.

\bibitem[HM]{hmillar} R. Hazrat, J. Millar, \emph{A note on $K$-Theory of
Azumaya algebras}, Comm. Algebra, {\bf 38} (2010), no. 3, 919-926.



\bibitem[HW]{hazwad}R. Hazrat, A. R. Wadsworth, \emph{Nontriviality of certain quotients of $K_1$ groups of division algebras},  J. Algebra, \textbf{312} (2007) 354--361.

\bibitem[G]{gabber}O. Gabber, \emph{$K$-theory of Henselian local rings and
Henselian pairs}, Contemporary Math. vol 126, 1992, 59--70.

\bibitem[GHR]{ghr} S. Green, D. Handelman, P. Roberts,
\emph{$K$-theory of finite dimensional division algebras,} J. Pure
Appl. Algebra, {\bf 12} (1978), no. 2, 153--158.

\bibitem[K]{knus}M.-A. Knus, Quadratic and Hermitian forms over rings,
Springer-Verlag, Berlin, 1991.

\bibitem[S]{srinivas} V. Srinivas, Algebraic $K$-theory, Second edition. Progress in Mathematics, 90. BirkhŠuser Boston, 1996.


\bibitem[P]{panin} I. A. Panin, \emph{The equicharacteristic case of the {G}ersten
conjecture}, Proc. Steklov Inst. Math. {\bf 241}  (2003), 154--163.

\end{thebibliography}
\end{document}